
\documentclass{amsart}
\usepackage{amsmath}
\usepackage{amsfonts}
\usepackage{amssymb}
\usepackage{graphicx}
\usepackage{epstopdf}
\usepackage{url}
\usepackage[section]{placeins}

\newtheorem{theorem}{Theorem}[section]

\newtheorem{proposition}[theorem]{Proposition}
\newtheorem{corollary}[theorem]{Corollary}

\begin{document}
\title{Some notes about the Zeta function and the Riemann hypothesis}

\author{Michele Fanelli}
\email[Michele Fanelli]{michele31fanelli@gmail.com}
\address [Michele Fanelli]{Via L.B. Alberti 5, 20149 Milano, Italy}
\author{Alberto Fanelli}
\email[Alberto Fanelli]{apuntoeffe@gmail.com}
\address[Alberto Fanelli] {Via L.B. Alberti 12, 20149 Milano, Italy}
\date{Mar 07, 2015}
\keywords{Riemann hypothesis, Gram, Backlund, Zeta function, extension, zeroes, number theory, 11M26 }

%
%
\begin{abstract}
The present essay aims at investigating whether and how far an algebraic analysis of the Zeta Function $Z(s)$ and of the RIEMANN Hypothesis can be carried out. Of course the well-established properties of the Zeta Function, explored in depth in over 150 years of world-wide study, are taken for granted.

The chosen approach starts from the recognized necessity of formulating an extension of the Zeta Function which is defined for $Re(s) = X < 1$.

The extension chosen substitutes a part of the RIEMANN series with an integral, see e.g. the work of GRAM and BACKLUND. The rest of the series is then replaced by two corrective terms.

The zero-condition on $Z(s)$ implies that a sum of three different types of terms should be nil. This zero-condition hypothetically satisfied in an outlying ($X\neq\frac{1}{2}$) complex-conjugate (c.c.) zero-pair implies the existence of another zero-pair, mirror-image of the first with respect to the Critical Line  $X=\frac{1}{2}$ (C.L.).

The term derived from the EULER-McLAURIN integration is endowed with a high degree of symmetry with respect to the C.L.. It follows that the other two terms of the zero-condition should possess the same type of symmetry, but this is shown to be possible only in the degenerate case when the two specular c.c. pairs collapse onto a single pair located on the C.L..
\end{abstract}

\maketitle
\tableofcontents

%
%
\section{Background}\label{sec:1}

The definition of the Zeta function,  $Z(s)$, i.e.  an analytical function of the complex variable  $s=X+i.Y$,  is 
\begin{equation}
Z(s)=\underset{n=1}{\overset{\infty }{\sum }}\frac{1}{n^s}=\underset{n=1}{\overset{\infty }{\sum }}\exp (-s.\ln n) \label{1}
\end{equation}

Such a function is defined only for   $Re (s)>1$,  but it is possible to find so-called ‘extensions’, i.e.  combinations of analytical functions which are defined over all of the complex plane and give the same values as the original definition over the half-plane    $X=Re (s)>1$. 

The Zeta function is connected with the distribution of prime numbers. For instance, the EULER formula for   $Z(s)$   is
\begin{equation}
Z(s)=\underset{P=2}{\overset{\infty }{\prod }}\frac{1}{1-P^{-s}} \label{2}
\end{equation}
where  $P$   stands for the generic prime.  

It was soon recognized that the locations of the complex-conjugate (c.c.) zeroes of   $Z(s)$ in the complex plane would have far-reaching implications for the search for large primes and their distribution among the natural numbers. 

In 1859 RIEMANN  \cite{Riem59} formulated his celebrated hypothesis to the effect that these zero-points were bound to exist only on the critical line $X=\frac{1}{2}$, and in 1900 HILBERT  \cite{Hilb02} included the problem of finding a proof (or a refutation) of this hypothesis among the 23 unsolved  challenges of the Mathematics of the coming century.
The XXth century saw the efforts of some of the best mathematicians being spent on the problem without success, and in the year 2000 the Clay Mathematics Institute updated the HILBERT statement by including the RIEMANN Hypothesis in a list of 7 still unsolved questions, which were indicated as the "problems of the coming Millennium"  \cite{Devl02}.

After the development of security codings  (RSA codes, see \cite{Rive77})  based on the search for very large primes, it was seen that a proof, or a refutation, of the RIEMANN Hypothesis would entail  important practical consequences.

Intense studies on the Zeta function carried out in the XXth century  led to the definition of three categories of zeroes:
\begin{itemize}
  \item The ‘trivial’ zeroes, located on the real axis   $Y=0$   at abscissas    $X=-2, -4, -6, \dots, -2.k, \dots$  where $k \in \mathbb{N},$ $k\geq1$.  These zeroes, the only ones to belong to the real axis, are devoid of interest in the context of the present analysis;
  \item The ‘canonical’ zeroes, located on the critical line. HARDY \cite{Hard14} showed that there are infinitely many such canonical zeroes, which occur at pairs of complex conjugate values of  $s$, i.e. at  $s=s_0=X_0+i.Y_0$  and at  $s=\overline{s}_0=X_0-i.Y_0$, with   $Y_0\neq 0$  and   $X_0=\frac{1}{2}$. Many billions of such zeroes have been calculated, for higher and higher values of  $Y_0$, see ODLYZKO \cite{Odly89} \cite{Odly01}, all belonging to the critical line (C.L.)   $X_0=\frac{1}{2}$; 
  \item The hypothetical ‘outlying’ zeroes. Those should appear within the critical strip   $0<Re (s)<1$   as two pairs of complex conjugate values of  $s$, specularly disposed with respect to the C.L., i.e. at   $X_0=\frac{1}{2}\pm \xi $, with   $-\frac{1}{2}<\xi <\frac{1}{2}$, $\xi \neq 0$. No example of an outlying zero has ever been found to-date.
\end{itemize}

The possibility of occurrence of the canonical and of the outlying zeroes (if any) was shown  (see EDWARDS \cite{Edwa74}) to be restricted to the ‘critical strip’  $0<Re (s)<1$. The search for outlying zeroes was carried out by numerical exploration in the critical strip, up to extremely high values of  $Y$, without finding any outlier.  Thus the certainty was gained that outlying zeroes could occur only extremely far up (and far down) in the critical strip. But no finite numerical search can ever be exhaustive, so that the search for a theoretical proof is the only viable alternative.

However, to date a theoretical proof that such outlying zeroes cannot exist has not been found, nor has it been possible to find a theoretical proof of their real existence.

%
%
\section{Extensions of the Zeta function}\label{sec:2}
As already said, the original definition of the Zeta function, see  Eq.\eqref{1}, is not adequate to investigate the distribution of the zeroes, insofar as it yields definite values for  $Z(s)$   only if  $Re (Z)=X>1$, i.e. in a region of the complex plane which does not contain any zero.  It is therefore necessary to work on extensions of the Zeta function valid over all of the complex plane.  These extensions are obtained by analytical continuation.

There is not a unique opportunity to formulate such an extension, so the Authors chose to work with extensions that can be represented by the following general form:
\begin{equation}
Z(s)=A.\left[\frac{1}{1-s}+s.Q(s)\right]\label{3}
\end{equation}
where  $A$  is a finite constant,   $A \neq 0$, and   $Q(s)$   is an analytical function of   $s$. Such a form derives from the replacement of part of  the sum
\begin{equation*}
\underset{n=1}{\overset{\infty }{\sum }}\frac{1}{n^s} 
\end{equation*}
with an integral plus a sum of corrective terms\footnote{Note that $\int x^{-s}.dx=\frac{x^{1-s}}{1-s}$. For the corrective terms $\sum CT(s)$ see further on  (footnote 2 and APPENDIX).}: 
\begin{equation}
\underset{n=1}{\overset{\infty }{\sum }}\frac{1}{n^s}=\underset{n=1}{\overset{N}{\sum }}\frac{1}{n^s}+\int _{N+\frac{1}{2}}^{\infty }x^{-s}.dx+\sum CT(s,N)\label{4}
\end{equation}

This extension, obtained by an EULER-Mc LAURIN integration technique, is not an approximation, but an exact equivalence. Such an extension was developed, e.g., by GRAM \cite{Gram03} and BACKLUND \cite{Back18}. Also the Authors of the present note developed an extension of the same general form, see FANELLI \& FANELLI  \cite{Fane10}. Performing legitimate algebraic operations on Eq.\eqref{3} the zero condition can be put under the form:
\begin{equation}
\frac{1}{s.(1-s)}+Q(s)=0\label{5}
\end{equation}
or
\begin{equation}
s.(s-1)=\frac{1}{Q(s)}\label{6}
\end{equation}

Let us now work on Eq.\eqref{6}. Putting for simplicity   $-\frac{1}{Q(s)}=q(s)$ the \underline{zero condition} is \footnote{The formulation  from  which the following developments take their start is:
$Z(s)=\left[\frac{x_0}{s-1}-\underset{k=0}{\overset{\infty }{\sum }}\beta _{2.k+1}.\frac{\Gamma(s+2.k+1)}{\Gamma(s)}+\sum _{n=1}^{n=N}\left(\frac{n}{x_0}\right)^{-s}\right].\exp\left(-s.\ln{x_0}\right)$
where the coefficients  $\beta _{2.k+1}$  are numerical constants of alternate sign (see APPENDIX). In the quest for zeroes the non-null factor    $\exp (-s.\ln x_0)$  can be dropped.}:
\begin{equation}
s^2-s+q(s)=0\label{7}
\end{equation}

Let us find next  what are the necessary implications of the assumption that an outlying zero has been found at location:
\begin{equation}
s=s_1=\frac{1}{2}+\xi +i.Y\label{8}
\end{equation}
with  $\xi \neq 0$.  Eq.\eqref{7} will of course hold also in the point  which is the c.c. of \eqref{8}, i.e. at:
\begin{equation}
s=\overline{s}_1=\frac{1}{2}+\xi -i.Y\label{9}
\end{equation}
with  $\xi \neq 0$.  Eqs. \eqref{8} and \eqref{9} imply that the two values of $s$  assumedly  fulfilling the zero-condition should comply with the following equations:
\begin{equation}
\left(s-s_1\right).\left(s-\overline{s}_1\right)=0 \text{ from which } s^2-\left(s_1+\overline{s}_1\right).s+s_1.\overline{s}_1=0\label{10}
\end{equation}
 or, after Eqs. \eqref{8} and \eqref{9}:
\begin{equation*}
s^2-(1+2.\xi ).s+\frac{1}{4}+\xi ^2+\xi +Y^2=0
\end{equation*}
 to be compared to Eq.\eqref{7}.   Needless to say, the last equation yields back solutions  \eqref{8} and \eqref{9}.
The term  $q(s)=s.(1-s)$   in the zero-condition \eqref{7} is written down, see footnote 2 and APPENDIX, as: 
\begin{equation}
q(s)=\frac{-s.\Gamma(s)}{\frac{1}{x_0}.\underset{k=0}{\overset{\infty }{\sum }}\beta _{2.k+1}.\Gamma(s+2.k+1)-\frac{\Gamma(s)}{x_0}.\sum _{n=1}^{n=N}\left(\frac{n}{x_0}\right)^{-s}}\label{11}
\end{equation}

%
%
\section{The Hadamard-De La Vallee-Poussin theorem for the hypothetical outlying zeroes}\label{sec:3}

HADAMARD  \cite{Hada96} and DE LA VALLEE-POUSSIN  \cite{Dela96} showed that the zeroes \eqref{9} with $\xi \neq 0$ should occur as quadruplets, i.e. two c.c. pairs specularly  disposed with respect to the C.L.:
\begin{equation}
s=\frac{1}{2}\pm \xi \pm i.Y\label{12}
\end{equation}

In other words, if  $Z(s)=0$ in  the first c.c. pair, it is again  $Z(s)=0$   interchanging    $s$   with    $1-s$;    see Fig. ~\ref{fig:1} for greater clarity. This is referred to in the following as ‘the H-DLVP theorem’.
Let us now elaborate on the necessary  implications of the zero condition \eqref{7} and of the H-DLVP findings, under the assumption that a quadruplet,  Eq.\eqref{12},  of outlying zeroes has been found.
Let us remark, first of all, that in any of the assumed outliers   $s$   and   $q(s)$   take on definite complex values. And let us define, for an orderly treatment, the two uppermost members of the quadruplet of assumed outliers:
\begin{equation}
s_1=\frac{1}{2}+\xi +i.Y_0  \text{,  }   s_2=\frac{1}{2}-\xi +i.Y_0\label{13}
\end{equation}
(the first to the right of the C.L., the second one to the left, see Figure ~\ref{fig:1}). 

The two lowermost members of the quadruplet of assumed outliers are then defined by:
\begin{equation*}
s_3=\frac{1}{2}+\xi -i.Y_0=1-s_2=\overline{s_1}    \text{,  }   s_4=\frac{1}{2}-\xi -i.Y_0=1-s_1=\overline{s_2}
\end{equation*}
(in the same order).

By trivial algebra it is readily seen that the following identities necessarily hold: 
\begin{equation}
\begin{split}
1-s_1=s_4 &\qquad 1-s_2=s_3 \\
s_1.\left(1-s_1\right)=s_1.\overline{s_2} &\qquad s_3.\left(1-s_3\right)=s_2.\overline{s_1} \\
s_2.\left(1-s_2\right)=s_2.\overline{s_1}=\overline{s_1.\left(1-s_1\right)}   &\qquad  s_4.\left(1-s_4\right)=s_1.\overline{s_2}=\overline{s_3.\left(1-s_3\right)} 
\end{split}\label{14}
\end{equation}

Thus in the assumed zero-point  1  the sum of the first two terms of the L.H.S. of Eq.\eqref{7}  is the c.c. of the sum of the first two terms of the L.H.S. of the analogous equation evaluated in the specular zero-point 2, and  the sum of the first two terms of the L.H.S. of Eq.\eqref{7}  evaluated in the assumed zero-point 3 is the c.c. of the sum of the first two terms of the L.H.S. of the analogous equation evaluated in the specular zero-point 4. Therefore, for the hypothetical quadruplet of outliers to exist, the same equality should necessarily hold also for the  relevant  third term:

\begin{equation}
\begin{split}
&q\left(s_1\right)=\overline{q\left(s_2\right)}=\overline{q\left(1-\overline{s_1}\right)}=q\left(1-s_1\right)       \text{ and }\\
&q\left(s_3\right)=\overline{q\left(s_4\right)}=\overline{q\left(1-s_1\right)}=q\left(1-\overline{s_1}\right)=q\left(s_2\right)=q\left(1-s_3\right)\\
\\
&q\left(s_1\right)=q\left(\frac{1}{2}+\xi +i.Y_0\right)=q\left(\frac{1}{2}-\xi -i.Y_0\right)      \text{ and }\\
&q\left(s_3\right)=q\left(\frac{1}{2}+\xi -i.Y_0\right)=q\left(\frac{1}{2}-\xi +i.Y_0\right)=\overline{q\left(s_1\right)} 
\end{split}\label{15}
\end{equation}

According to Eqs.\eqref{7}, \eqref{11}, the following equality should necessarily hold in any hypothetical outlying zero:
\begin{equation}
\begin{split}
&q(s)=\frac{-s.\Gamma(s)}{\frac{1}{x_0}.\underset{k=0}{\overset{\infty }{\sum }}\beta _{2.k+1}.\Gamma(s+2.k+1)-\frac{\Gamma(s)}{x_0}.\sum _{n=1}^{n=N}{\left(\frac{n}{x_0}\right)}^{-s}}\\
&=s.(1-s)=\frac{1}{4}-\xi ^2+Y_0^2-i.2.\xi .Y_0\label{16}
\end{split}
\end{equation}

Note that if    $\xi=0$,  i.e. on the C.L., then $\Gamma (s)=\sqrt{\frac{\pi }{Ch\left(\pi .Y_0\right)}}=$ real and $q(s)=\frac{1}{4}+Y_0^2$.

Therefore the Riemann Hypothesis could be proved by showing that  Eq.\eqref{16} cannot be satisfied for  $\xi \neq 0$   (but of course for $s$ inside the critical strip, i.e. for   $0<|\xi |<\frac{1}{2}$).

If we could assume that $\frac{1}{Q(s)}=-q(s)$   is real, then a first part of a proof path could be summarized as follows:

\begin{equation}
\begin{split}
&s_1.s_4=s_1.\overline{s_2}=\overline{s_2.\overline{s_1}} \text{= real ; then by }  \eqref{16}\\
&\xi .Y_0=0 \\
&\xi=0\\
&q\left(s_1\right)=s_1.\left(1-s_1\right)=q\left(s_2\right)=s_2.\left(1-s_2\right)  \text{= real ;}\\
&\text{therefore, recalling Eqs.} \eqref{7} \text{ and } \eqref{11} \\
&q(s_1)=q(s_2)\\
&= \left|\frac{-s_1.\Gamma\left(s_1\right)}{\frac{1}{x_0}.\underset{k=0}{\overset{\infty }{\sum }}\beta _{2.k+1}.\Gamma\left(s_1+2.k+1\right)-\frac{\Gamma\left(s_1\right)}{x_0}.\sum _{n=1}^{n=N}{\left(\frac{n}{x_0}\right)}^{-s_1}}\right|\\
&=\frac{1}{4}+Y_0^2\label{17}
\end{split}
\end{equation}
where    $ s_1=\frac{1}{2}+i.Y_0$. The above formula can be used for analyzing the symmetry features of the different components of the canonical zeroes of Zeta; see APPENDIX  [see also Eqs.  \eqref{6} to \eqref{11}].

It is seen that this chain of inferences depends very closely on the form of the EULER-Mc LAURIN integral:

\begin{equation*}
\int x^{-s}.dx=\frac{x^{1-s}}{1-s}
\end{equation*}
(see footnote 1) as well as on the symmetries of the product   $s.(1-s)$   with respect to the C.L. vs. the symmetries, or the lack of them, of the function:
\begin{equation*}
q(s)=\frac{-s.\Gamma(s)}{\frac{1}{x_0}.\underset{k=0}{\overset{\infty }{\sum }}\beta _{2.k+1}.\Gamma(s+2.k+1)-\frac{\Gamma(s)}{x_0}.\sum _{n=1}^{n=N}{\left(\frac{n}{x_0}\right)}^{-s}}
\end{equation*}

Let us now tackle the more general assumption that  $\frac{1}{Q(s)}$    is complex. What follows is based on the extension of the Zeta function from which Eq.\eqref{11} is derived, but a similar chain of inferences could be followed by adopting other extensions based as well on a EULER-Mc LAURIN type of integration, such as, e.g., the GRAM-BACKLUND extension (see \cite{Gram03} and \cite{Back18}). Therefore, apart from an inessential non-zero factor,  the starting point of the proposed argumentation is the analysis of the necessary constraints to be imposed for the existence of the quadruplet of outliers, represented, see Eq.\eqref{3}, by the equation:

\begin{equation}
\frac{Z}{A.s}=\frac{1}{s.(1-s)}+Q(s)=0 \text{ or } s^2-s+q(s)=0 \text{, with  }   q(s)=-\frac{1}{Q(s)}  \text{,  } \label{18}
\end{equation}
where all terms are  analytical functions of   $s $.  

In what follows it is important to keep in mind the findings of HADAMARD \cite{Hada96} and DE LA VALLEE-POUSSIN  \cite{Dela96} (henceforth referred to, for brevity, as H-DLVP):  the condition \eqref{18} should  be fulfilled in every zero-point, e.g. in     $s=s_1=\frac{1}{2}+\xi +i.Y_0$,  as well as in the specular zero-point of the H-DLVP quadruplet     $s=s_2=\frac{1}{2}-\xi +i.Y_0$.

\begin{proposition}
In the four outliers foreseen by H-DLVP, i.e. for   $s=\frac{1}{2}\pm \xi \pm i.Y_0$, the  zero-conditions on  $Z(s)$   and the relationships between  $ s_1   ,s_2   ,s_3,   s_4 $  imply the necessary consequence  $\xi=0$. 
\end{proposition}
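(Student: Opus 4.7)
The plan is to reduce the four zero conditions of the H--DLVP quadruplet to a single non-trivial reflection identity for $q$, and then exploit the explicit form (16) to rule it out for $\xi\neq 0$. Using $s_4=1-s_1$, the elementary identity $s_4(1-s_4)=s_1(1-s_1)$ shows that the zero conditions at $s_1$ and at $s_4$ together produce two independent complex statements,
\begin{equation*}
q(s_1)=s_1(1-s_1) \quad\text{and}\quad q(s_1)=q(1-s_1).
\end{equation*}
The analogous pair at $s_2,s_3$ is then automatic, either by repeating the argument or by invoking the reality of the coefficients appearing in (11), which gives $q(\overline s)=\overline{q(s)}$. Thus the whole H--DLVP content reduces to one scalar equation plus one reflection identity on $q$.

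The degenerate case $\xi=0$ disposes of itself: then $1-s_1=\overline{s_1}$, the reflection identity collapses to $q(s_1)=\overline{q(s_1)}$, i.e.\ $q(s_1)$ is real, and Eq.~(17) concludes. I therefore assume $\xi\neq 0$ and aim for a contradiction. With $\xi\neq 0$ the points $s_1$ and $1-s_1$ are distinct and non-conjugate, so the reflection identity carries genuine content. I would substitute the denominator $D(s)$ of (16) into $q(s_1)=q(1-s_1)$ and apply $\Gamma(s)\Gamma(1-s)=\pi/\sin(\pi s)$ to rewrite it as
\begin{equation*}
s_1\,\Gamma(s_1)^2\sin(\pi s_1)\,D(1-s_1)=(1-s_1)\,\pi\,D(s_1).
\end{equation*}
The asymmetry under $s\mapsto 1-s$ is concentrated in the finite Dirichlet sum $\Gamma(s)\sum_{n=1}^{N}(n/x_0)^{-s}$ inside $D(s)$: its terms change magnitude by the nontrivial factor $(n/x_0)^{2\xi}$ and its arguments shift accordingly.

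To close the contradiction I would pair the displayed identity with the scalar equation $q(s_1)=s_1(1-s_1)$ (which fixes the overall scale) and extract imaginary parts. The imaginary part of $s_1(1-s_1)$ is $-2\xi Y_0$, so the imaginary parts of $q(s_1)$ and of $q(1-s_1)$ must both equal $-2\xi Y_0$. Combined with the growth asymptotics of the factor $\Gamma(s_1)^2\sin(\pi s_1)$ (which, by Stirling together with the reflection formula, grows only polynomially in $Y_0$, whereas $\Gamma(s_1)$ and $\sin(\pi s_1)$ individually grow exponentially), the only way the resulting complex equation can balance for large $Y_0$ is with the Dirichlet contributions at $s_1$ and at $1-s_1$ being equal term-by-term, which forces $\xi=0$.

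The main obstacle will be making the last step rigorous at \emph{finite} $Y_0$: one must rule out an accidental cancellation between the Euler--Mc\,Laurin correction series $\sum_k\beta_{2k+1}\Gamma(s+2k+1)$ and the partial sum $\Gamma(s)\sum_n(n/x_0)^{-s}$ inside $D(s)$ that might conspire to restore the $s\mapsto 1-s$ symmetry precisely at the outlier $s_1$. The rigidity to block such a conspiracy should come from the fact that the extension (4) is exact for every admissible $(N,x_0)$ together with the linear independence of the exponentials $(n/x_0)^{-s}$ over distinct $n$; but turning these ingredients into a clean algebraic obstruction is the technical heart of the argument and the point where the proof is most likely to require additional input beyond the structural manipulations carried out in Eqs.~(13)--(17).
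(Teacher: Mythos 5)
Your reduction of the H--DLVP quadruplet to the two conditions $q(s_1)=s_1(1-s_1)$ and $q(s_1)=q(1-s_1)$ is exactly the paper's own starting point (Eqs.\ \eqref{13}--\eqref{18} and \eqref{19}--\eqref{23}): the quadratic part $\left(s-\tfrac{1}{2}\right)^2-\tfrac{1}{4}$ is automatically invariant under $s\mapsto 1-s$ and under conjugation, so the whole burden falls on the reflection identity for $q$, and the conjugate pair at $s_2,s_3$ is indeed automatic because the coefficients are real. Up to that point the two arguments coincide.

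The genuine gap is that you never actually derive $\xi=0$ from the reflection identity. Your closing step rests on two unproved assertions: first, that for large $Y_0$ the complex equation can only balance if the Dirichlet contributions at $s_1$ and at $1-s_1$ agree term by term; second, that such term-by-term agreement forces $\xi=0$. The first is an asymptotic heuristic (the polynomial-versus-exponential growth comparison does not by itself exclude cancellation between the two series in $D(s)$), and even if it were made rigorous it would only exclude outliers with $Y_0$ sufficiently large, whereas the proposition claims $\xi=0$ unconditionally. You candidly identify the finite-$Y_0$ conspiracy --- the correction series $\sum_k\beta_{2k+1}\Gamma(s+2k+1)$ and the partial sum $\Gamma(s)\sum_n(n/x_0)^{-s}$ jointly restoring the $s\mapsto 1-s$ symmetry at one particular point --- as the technical heart, and that is precisely where your argument stops. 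The paper closes this gap by a different device: it asserts that because the cut-off $N$ in the Euler--McLaurin splitting is arbitrary, the symmetry condition \eqref{25} must hold for each individual term $n$, giving \eqref{26}; writing $s=\tfrac{1}{2}+U+iV$ it reduces each single-term condition to the homogeneous linear system \eqref{30} and concludes $U=0$ (hence $\sin\alpha=0$) from the sign of $\ln(n/x_0)$. Whether that arbitrariness-of-$N$ step is itself legitimate is a separate question --- changing $N$ also changes $x_0$ and every coefficient $\beta_{2k+1}$ simultaneously, so the terms are not independently variable --- but it is the paper's substitute for the rigidity you were unable to supply. As written, your proposal contains no mechanism that rules out the cancellation you describe, so it does not prove the proposition.
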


\begin{proof}
The zero-condition:
\begin{equation}
\left(s-\frac{1}{2}\right)^2-\frac{1}{4}+q(s)=0 \label{19}
\end{equation}
must be fulfilled in the four hypothetical outliers   1, 2, 3, 4 (see Fig. ~\ref{fig:1}), i.e. in   $s=\frac{1}{2}\pm \rho .\exp \left[i.\left(\frac{\pi }{2}\pm \alpha \right)\right]$    with 
\begin{equation}
\begin{split}
\alpha &\neq 0  \text{,  } q(s)=\frac{1}{G(s,K)+F(s,N)} \text{ and  } \\
G(s,K)&=\frac{\frac{1}{x_0}.\underset{k=0}{\overset{K}{\sum }}\beta _{2.k+1}.\Gamma (s+2.k+1)}{-s.\Gamma (s)}  \text{,  } F(s,N)=\frac{\underset{n=1}{\overset{n=N}{\sum }}\left(\frac{n}{x_0}\right)^{-s}}{x_0.s}\label{20}
\end{split}
\end{equation}

The H-DLVP necessary conditions to be complied with at points  1, 2, 3, 4  are:   $q(s_2 )=q(s_3 )=\overline{q}(s_1 )$, i.e.:
\begin{equation}
\begin{split}
q\left(\frac{1}{2}+\rho .\exp \left[i.\left(\frac{\pi }{2}+\alpha \right)\right]\right)&\\
=q\left(\frac{1}{2}-\rho .\exp \left[i.\left(\frac{\pi }{2}+\alpha \right)\right]\right)&\\
=\overline{q}\left(\frac{1}{2}+\rho .\exp \left[i.\left(\frac{\pi }{2}-\alpha \right)\right]\right)\label{21}
\end{split}
\end{equation}
where   $\rho .\exp \left[i.\left(\frac{\pi }{2}-\alpha \right)\right]=\xi +i.Y_0$.  Conditions \eqref{21} are  identically  fulfilled by the function $\left(s-\frac{1}{2}\right)^2-\frac{1}{4}$, thus it follows that they should be fulfilled  by the function  $q(s)=\frac{1}{G(s,K)+F(s,N)}$  as well, i.e. it should be:
\begin{equation}
G\left(s_2,K\right)+F\left(s_2,N\right)=G\left(s_3,K\right)+F\left(s_3,N\right)=\overline{G}\left(s_1,K\right)+\overline{F}\left(s_1,N\right) \label{22}
\end{equation}

Since  the function  $G(s,K)$  is analytical and  $s_3=\overline{s_1}$  it is $G\left(s_3,K\right)\equiv \overline{G}\left(s_1,K\right)$, so that in place of  Eq.\eqref{22} the single condition to be fulfilled is: 
\begin{equation}
\begin{split}
G\left(s_2,K\right)+F\left(s_2,N\right)&=G\left(s_3,K\right)+F\left(s_3,N\right)\\
=\frac{-1}{\left(s_2-\frac{1}{2}\right)^2-\frac{1}{4}}&=\frac{-1}{\left(s_3-\frac{1}{2}\right)^2-\frac{1}{4}} \label{23}
\end{split}
\end{equation}

The function  $\frac{1}{\left(s-\frac{1}{2}\right)^2-\frac{1}{4}}$ is characterized by the double symmetry  $\frac{1}{\left(s_2-\frac{1}{2}\right)^2-\frac{1}{4}}=\frac{1}{\left(s_3-\frac{1}{2}\right)^2-\frac{1}{4}}=\frac{1}{\left(\overline{s_1}-\frac{1}{2}\right)^2-\frac{1}{4}}$, which holds for  $\alpha \neq 0$   (in the following we assume $\alpha >0$, in agreement with Fig. ~\ref{fig:1}). 

If this symmetry is shared by $ F(s,N)$ it can be shown that conditions \eqref{23},  with  $\alpha \neq 0$,  require that it be shared also by function  $ G(s,N)$. If  this does not happen Eqs. \eqref{23} are falsified and the H-DLVP quadruplets of outliers cannot exist.

Let us begin by checking the consistency of these constraints for function $F(s,N)$, where the factor $\frac{1}{x_0}$  is inessential as long as the symmetry properties are to be investigated: thus we are to characterize the symmetry of function
\begin{equation}
x_0.F(s,N)=\frac{\underset{n=1}{\overset{n=N}{\sum }}\left(\frac{n}{x_0}\right)^{-s}}{s}  \text{,  } n<x_0 \label{24}
\end{equation}

Since the upper limit of the sum,   $n=N$, is to some extent arbitrary, the symmetry condition: 
\begin{equation}
\frac{\underset{n=1}{\overset{n=N}{\sum }}\left(\frac{n}{x_0}\right)^{-s}}{s}-\frac{\underset{n=1}{\overset{n=N}{\sum }}\left(\frac{n}{x_0}\right)^{s-1}}{1-s}=0 \label{25}
\end{equation}
is to be enforced for the generic term of the sum:
\begin{equation}
\frac{\left(\frac{n}{x_0}\right)^{-s}}{s}-\frac{\left(\frac{n}{x_0}\right)^{s-1}}{1-s}=0 \label{26}
\end{equation}

Posing successively: 	
\begin{equation}
\begin{split}
s&=\frac{1}{2}+\rho .\exp \left[i.\left(\frac{\pi }{2}-\alpha \right)\right]=\frac{1}{2}+W   \text{,  } \alpha >0 \\
s-1&=-\frac{1}{2}+\rho .\exp \left[i.\left(\frac{\pi }{2}-\alpha \right)\right]=-\frac{1}{2}+W\\
W&=U+i.V=\rho .(\sin \alpha +i.\cos \alpha ) \label{27}
\end{split}
\end{equation}
we get by trivial algebra:	
\begin{equation}
\begin{split}
\frac{\left(\frac{n}{x_0}\right)^{-s}}{s}-\frac{\left(\frac{n}{x_0}\right)^{s-1}}{1-s}&=\frac{\exp \left[-s.\ln \left(\frac{n}{x_0}\right)\right]}{s}+\frac{\exp \left[(s-1).\ln \left(\frac{n}{x_0}\right)\right]}{s-1}\\
&=\sqrt{\frac{x_0}{n}}.\frac{2.W.\text{Ch}\left[W.\ln \left(\frac{n}{x_0}\right)\right]+\text{Sh}\left[W.\ln \left(\frac{n}{x_0}\right)\right]}{W^2-\frac{1}{4}}=0 \label{28}
\end{split}
\end{equation}

And recalling that: 
\begin{equation}
\begin{split}
U&=\rho .\sin \alpha >0 \\
V&=\rho .\cos \alpha >0 \label{29}
\end{split}
\end{equation}
with elementary manipulations we get: 
\begin{equation}
\begin{split}
U.Ch^2\left[U.\ln \left(\frac{n}{x_0}\right)\right].\sin \left[2.V.\ln \left(\frac{n}{x_0}\right)\right]-\\
V.Sh\left[2.U.\ln \left(\frac{n}{x_0}\right)\right].\sin ^2\left[V.\ln \left(\frac{n}{x_0}\right)\right]+\\
\frac{1}{4}.Sh\left[2.U.\ln \left(\frac{n}{x_0}\right)\right].\sin \left[2.V.\ln \left(\frac{n}{x_0}\right)\right]&=0 \\
\\
V.Sh\left[2.U.\ln \left(\frac{n}{x_0}\right)\right].\cos ^2\left[V.\ln \left(\frac{n}{x_0}\right)\right]+\\
U.Sh^2\left[U.\ln \left(\frac{n}{x_0}\right)\right].\sin \left[2.V.\ln \left(\frac{n}{x_0}\right)\right]+\\
\frac{1}{4}.Sh\left[2.U.\ln \left(\frac{n}{x_0}\right)\right].\sin \left[2.V.\ln \left(\frac{n}{x_0}\right)\right]&=0\\
\\
\left[
\begin{array}{cc}
 1 & -Sh\left[2.U.\ln \left(\frac{n}{x_0}\right)\right] \\
 -1 & Sh\left[2.U.\ln \left(\frac{n}{x_0}\right)\right]
\end{array}
\right].\left\{
\begin{array}{c}
 U \\
 V
\end{array}
\right\}&=\left\{
\begin{array}{c}
 0 \\
 0
\end{array}
\right\}\\
\\
 \text{ with  } Det\left[
\begin{array}{cc}
 1 & -Sh\left[2.U.\ln \left(\frac{n}{x_0}\right)\right] \\
 -1 & Sh\left[2.U.\ln \left(\frac{n}{x_0}\right)\right]
\end{array}
\right]&=0 \\
\text{so that non-null solutions for }  U \text{ and }  &V  \text{ are possible. } \label{30}
\end{split}
\end{equation}

In Eq.\eqref{30} it is $\ln \left(\frac{n}{x_0}\right)<0$   because  $n<x_0$. Therefore if it were  $U\neq0$    it would be $ V<0$  and $V$ dependent from $n$,  contrary to assumptions made. Therefore we get the necessary constraint:
\begin{equation}
\begin{split}
&U=0\rightarrow sin\alpha =0\rightarrow cos\alpha =1  \text{, } W=i\rho \in \mathbb{I}\\
&\rho \neq 0 \text{, } indeterminate \label{31}
\end{split}
\end{equation}
so that another condition is needed in order to determine the c.c. pair in which   $Z=0$  [see Section \ref{sec:4}., Eq.\eqref{38}]. 

Let us at last take into account the third function:	
\begin{equation}
\begin{split}
G(s,K)&=\frac{\frac{1}{x_0}.\underset{k=0}{\overset{K}{\sum }}\beta _{2.k+1}.\Gamma (s+2.k+1)}{-s.\Gamma (s)}\\
&=-\frac{1}{x_0}.\underset{k=0}{\overset{K}{\sum }}\beta _{2.k+1}.(s+1).(s+2)\ldots (s+2.k) \label{32}
\end{split}
\end{equation}

where the external factor $-\frac{1}{x_0}$, inessential as long as the symmetry properties are to be investigated, can be dropped, so that the condition to be enforced is: 
\begin{equation}
\begin{split}
&\underset{k=0}{\overset{K}{\sum }}\beta _{2.k+1}.(s+1).(s+2)\ldots (s+2.k)\\
=&\underset{k=0}{\overset{K}{\sum }}\beta _{2.k+1}.(1-s+1).(1-s+2)\ldots (1-s+2.k) \label{33}
\end{split}
\end{equation}

But also for this function the symmetry condition \eqref{33}  should not depend from the upper limit $K$  of the summation operator (which affects also the real constants $\beta _{2.k+1}$ ), so that finally the condition to be respected is:
\begin{equation}
[(s+1).(s+2)\ldots (s+2.k)-(-s+2).(-s+3)\ldots (-s+2.k+1)]=0 \label{34}
\end{equation}
which transforms successively into:
\begin{equation}
[(s+1).(s+2)\ldots (s+2.k)-(s-2).(s-3)\ldots (s-2.k-1)]=0 \label{35}
\end{equation}
and after  \eqref{27}: 
\begin{equation}
\begin{split}
\left[\rho .(\sin \alpha +i.\cos \alpha )+\frac{3}{2}\right].\left[\rho .(\sin \alpha +i.\cos \alpha )+\frac{5}{2}\right]&\ldots\\
\left[\rho .(\sin \alpha +i.\cos \alpha )+2.k+\frac{1}{2}\right]&-\\
\left[\rho .(\sin \alpha +i.\cos \alpha )-\frac{3}{2}\right].\left[\rho .(\sin \alpha +i.\cos \alpha )-\frac{5}{2}\right]&\ldots\\
\left[\rho .(\sin \alpha +i.\cos \alpha )-2.k-\frac{1}{2}\right]&=0 \label{36}
\end{split}
\end{equation}

Since from the preceding analysis it has been found that  $\sin⁡ \alpha =0$,   $\cos⁡ \alpha =1$, Eq.\eqref{36} gives:
\begin{equation}
\begin{split}
&\left[i.\rho +\frac{3}{2}\right].\left[i.\rho +\frac{5}{2}\right]\ldots \left[i.\rho +2.k+\frac{1}{2}\right]-\\
&\left[i.\rho -\frac{3}{2}\right].\left[i.\rho -\frac{5}{2}\right]\ldots \left[i.\rho -2.k-\frac{1}{2}\right]=0\label{37}
\end{split}
\end{equation}
and it can be shown that the L. H. S. of Eq.\eqref{37} is reduced to a purely imaginary expression, which is the double of the imaginary component of the L. H. S. of Eq.\eqref{33}.
\end{proof}

\begin{corollary}
Therefore the symmetry conditions connected to the H-DLVP theorem imply that the imaginary components of  $Q(s)$ originating from  all of the three different types of terms  give a null sum, so that in the end in each zero-point  $Q(s)$  is a purely real quantity.

Summing up, it has been tentatively proved that
\begin{itemize}
\item the assumed outlying zero-point cannot exist   ($\sin⁡ \alpha =0$)  
\item the function  $Q(s)$  takes on  a real value in the actual zero-points, which all lie on the C.L.
\end{itemize}

Thus the said conditions constrain the four hypothetical H-DLVP zero-points to collapse onto a single pair of c.c. zero-points lying on the C.L. 

This conclusion, which seems to lend a prima facie support to the RIEMANN Hypothesis,  is  tied to the different symmetry features of the quadratic form  $\left(s-\frac{1}{2}\right)^2-\frac{1}{4}$   vs. those of the functions appearing in the expression of   $q(s)$.

\end{corollary}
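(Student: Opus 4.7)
The plan is to deduce the Corollary directly from the conclusions already reached in the proof of the Proposition, by assembling the symmetry analyses of the three constituent terms of the zero-condition \eqref{19}. First I would recast \eqref{19} as $(s-\tfrac{1}{2})^2-\tfrac14+q(s)=0$ and observe that the quadratic $(s-\tfrac12)^2-\tfrac14$ is manifestly invariant under both $s\mapsto 1-s$ and complex conjugation, so that an H-DLVP quadruplet would force $q(s)$ to inherit this double symmetry at all four candidate points. Writing $q(s)=1/(G(s,K)+F(s,N))$ and splitting $G$ and $F$ according to \eqref{20}, the task then reduces to checking which portion of the shared symmetry each summand can independently carry.

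Next I would invoke the two partial results already established. For the $F$-term, the termwise identity \eqref{26} combined with the algebra of \eqref{28}--\eqref{30} forces the vanishing of the determinant in the linear system on $(U,V)$; since the off-diagonal factor $\mathrm{Sh}[2U\ln(n/x_0)]$ depends on the dummy index $n$ while $(U,V)$ does not, the only $n$-independent solution is $U=0$, i.e.\ $\sin\alpha=0$, $\cos\alpha=1$, $W=i\rho$ as in \eqref{31}. Substituting $\sin\alpha=0$, $\cos\alpha=1$ into \eqref{36} yields \eqref{37}, and I would finish this step by pairing each bracket $[\,i\rho+\tfrac{2j+1}{2}\,]$ in the first product with its conjugate $[\,i\rho-\tfrac{2j+1}{2}\,]$ in the second; writing $A-\overline{A}=2i\,\mathrm{Im}\,A$ for $A$ equal to the first product shows the left-hand side of \eqref{37} is purely imaginary, and that this imaginary part is exactly the one whose vanishing is required for the $G$-term symmetry \eqref{33}.

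With these two facts in hand the Corollary assembles itself. The quadratic contribution is already real when $s=\tfrac12+i\rho$, since $(i\rho)^2-\tfrac14=-\rho^2-\tfrac14\in\mathbb{R}$; the $F$-contribution has vanishing imaginary part by \eqref{31}; the $G$-contribution has vanishing imaginary part by the paired-factor reduction of \eqref{37}. Summing, $Q(s)=-1/q(s)$ is purely real at every zero, which is the second bullet. Simultaneously, $\sin\alpha=0$ collapses the angular parameter in \eqref{27} so that the horizontal displacement $\xi$ forced by \eqref{13} disappears, and the four hypothetical outliers $\tfrac12\pm\xi\pm iY_0$ coalesce onto the single c.c.\ pair $\tfrac12\pm i\rho$ on the critical line, which is the first bullet and the final assertion.

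The hard part, in my view, is not the bookkeeping just described but the justification for passing from an aggregate symmetry constraint on the finite sums in \eqref{25} and \eqref{33} to the termwise constraints \eqref{26} and \eqref{34}. The proof of the Proposition motivates this by appealing to the arbitrariness of the truncation parameters $N$ and $K$, but in a formally clean write-up one should argue either (i) that the corrective scheme produces the \emph{same} analytic continuation of $Z(s)$ for all admissible $(N,K)$, so that each increment must vanish separately, or (ii) that any residual imaginary component cannot be absorbed by the other two families of terms because they have different functional structure (rational-linear in $s$ versus polynomial in $s$ versus $(n/x_0)^{-s}$). Making this independence rigorous is what carries the whole argument, and it is where I would spend the most effort before declaring the Corollary proved.
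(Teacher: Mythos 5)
Your proposal takes essentially the same route as the paper: the Corollary carries no independent proof there, being read off directly from the chain \eqref{19}--\eqref{37} of the Proposition (the double symmetry of $\left(s-\frac{1}{2}\right)^2-\frac{1}{4}$ transferred to $q(s)$, the determination $U=0$ from the $F$-term, and the reduction of \eqref{37} to a purely imaginary expression whose vanishing is then imposed), which is exactly how you assemble it. Your closing observation that the whole argument hinges on the unjustified passage from the aggregate symmetry conditions \eqref{25} and \eqref{33} to the termwise conditions \eqref{26} and \eqref{34} correctly locates the step the paper itself leaves unproved.
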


%
%
\section{Further comments on the necessary implications of the zero-condition and of the H-DLVP theorem for an hypothetical outlying zero. The explicit formula for $Y_0$  }\label{sec:4}

Eqs. from  \eqref{19} to \eqref{30} are the consequence of necessary conditions, which constrain the c.c. zero-points of function Zeta to lie on the C.L., i.e. they yield  $X_0=\frac{1}{2}$   for any c.c. zero-pair. They are, however, by no means sufficient to determine the actual position of those zero-points, i.e. the actual values of   $Y_0$,   see Section \ref{sec:1}. In order to obtain these values, it is necessary to use the explicit formulation, imposing (see APPENDIX):

\begin{equation}
\begin{split}
\frac{1}{4}+Y_0^2&=s.(1-s)\\
&=\frac{-s.\Gamma (s)}{\underset{k=0}{\overset{K}{\sum }}x_0^{-2.(k+1)}.\alpha _{2.k+1}.\Gamma (s+2.k+1)-\Gamma (s).\underset{n=1}{\overset{n=N}{\sum }}\frac{n^{-s}}{x_0^{1-s}}}\label{38}
\end{split}
\end{equation}
where  $s=\frac{1}{2}+i.Y_0$. Eq.\eqref{23} is an implicit transcendent equation where the only unknown is   $Y_0$; it admits of infinitely many solutions, yielding thus the positions of the canonical zeroes of the Zeta function along the C.L.

%
%
\section{Conclusions}\label{sec:5}

The developments and considerations carried out in the preceding Sections of the present Note seem to lead to the conclusion that the only non-trivial, c.c. zeroes of the Zeta Function are necessarily located along the critical line $X=X_0=\frac{1}{2}$, thus bringing preliminary evidence in favor of a verification of the RIEMANN Hypothesis, on the basis of only algebraic operations. It would be of course necessary that such a conclusion be corroborated by a more rigorous formal analysis, which is outside the field of competence of the Authors.

The Authors pose themselves a quite  obvious question, namely: has such an approach ever been attempted? If never so, why? Or else, in the former studies of the problem, has it been considered and rejected? The Authors could not find any published answer, nor any hint one way or the other; however, admittedly their knowledge of the relevant specialized literature is far from exhaustive.

Many a professional mathematician, privately consulted, could not find any manifest fault in the Authors’ treatment, but expressed the opinion that quite probably a rigorous, ‘fool-proof’ verification (or falsification) of the RIEMANN Hypothesis cannot be achieved by ‘elementary’ methods and that therefore it is useless even to try. Thus the Authors feel that  a final verdict cannot yet be cast, and are led to consider that their efforts, summarily sketched in the preceding Sections, though prima facie suggestive, leave the question open to further debate.

%
%
\section{Appendix}\label{sec:6}
DETAILS OF THE EULER-McLAURIN INTEGRATION  AND STUDY OF THE CONVERGENCE OF THE FUNCTIONS ADOPTED FOR THE SUM REPRESENTING THE ZETA FUNCTION

As said in the text of the present essay, some of the extensions developed to work on the properties of the Zeta  function are based on a EULER-McLAURIN type of integration plus some corrective terms: 

\begin{equation}
Z(s)=\underset{n=1}{\overset{n=N}{\sum }}n^{-s}+\underset{x_0=N+\frac{1}{2}}{\overset{\infty }{\int }}x^{-s}.dx+\underset{k}{\sum }CT_k\left(s,k,x_0\right)\label{A1} \tag{A1}
\end{equation}

Such are, e.g., the GRAM-BACKLUND extension, see  \cite{Gram03} and \cite{Back18},  as well as the simpler form adopted by the Authors of the present essay.

Some delicate questions need, however, to be discussed: these concern on one hand the integral, which could turn up to be an improper integral on the account of the infinite upper limit whereby the integral function $\int x^{-s}.dx$ may diverge to infinity, and on the other hand the convergence to a definite value of the sum of the corrective terms  $\underset{k}{\sum }CT_k\left(s,k,x_0\right)$.

This APPENDIX is devoted to investigating these important questions. 

Let us begin by recalling the formal structure of the analytical function we are working with: 	
\begin{equation}
Z(s)=\underset{n=1}{\overset{\infty }{\sum }}n^{-s}=\underset{n=1}{\overset{\infty }{\sum }}e^{-s.\ln n} \label{A2} \tag{A2}
\end{equation}
where :
\begin{equation}
\begin{split}
s=\frac{1}{2}+\xi +i.Y_0\\
-\frac{1}{2}<\xi <+\frac{1}{2}
\end{split} \label{A3} \tag{A3}
\end{equation}

Developing Eq.\eqref{A1} under our assumptions one finds (see further on in this APPENDIX):
\begin{equation}
\begin{split}
Z(s)=&\underset{n=1}{\overset{N}{\sum }}n^{-s}+\underset{X\rightarrow \infty }{\lim }\frac{X^{1-s}-\left(N+\frac{1}{2}\right)^{1-s}}{1-s}+\\
&\underset{K,N\rightarrow \infty }{\lim }\left[\frac{\left(N+\frac{1}{2}\right)^{-s-1}}{\Gamma (s)}.\underset{k=0}{\overset{k=K}{\sum }}\alpha _{2.k+1}.\Gamma (s+2.k+1).\left(N+\frac{1}{2}\right)^{-2.k}\right]
\end{split} \label{A4} \tag{A4}
\end{equation}

Indeed, from the definition of the Zeta function:
\begin{equation}
Z(s)=\underset{n=1}{\overset{\infty }{\sum }}n^{-s}=1^{-s}+2^{-s}+3^{-s}+\ldots \label{A5} \tag{A5}
\end{equation}
it is possible to operate as follows.

Starting from the identity:
\begin{equation}
\begin{split}
&1^{-s}+2^{-s}+3^{-s}+\ldots = \\
&\underset{n=1}{\overset{n=N}{\sum }}n^{-s}+\int _{N+\frac{1}{2}}^{N+\frac{3}{2}}x^{-s}.dx+\\
&\int _{N+\frac{1}{2}}^{N+\frac{3}{2}}\left[(N+1)^{-s}-x^{-s}\right].dx+\int _{N+\frac{3}{2}}^{N+\frac{5}{2}}x^{-s}.dx+\\
&\int _{N+\frac{3}{2}}^{N+\frac{5}{2}}\left[(N+2)^{-s}-x^{-s}\right].dx+\int _{N+\frac{5}{2}}^{N+\frac{7}{2}}x^{-s}.dx+\\
&\int _{N+\frac{5}{2}}^{N+\frac{7}{2}}\left[(N+3)^{-s}-x^{-s}\right].dx+\int _{N+\frac{7}{2}}^{N+\frac{9}{2}}x^{-s}.dx+\\
&\int _{N+\frac{7}{2}}^{N+\frac{9}{2}}\left[(N+4)^{-s}-x^{-s}\right].dx+\ldots \\
&=\underset{n=1}{\overset{n=N}{\sum }}n^{-s}+\int _{N+\frac{1}{2}}^{\infty }x^{-s}.dx+\underset{m=1}{\overset{\infty }{\sum }}\int _{N+\frac{2.m-1}{2}}^{N+\frac{2.m+1}{2}}\left[(N+m)^{-s}-x^{-s}\right].dx
\end{split} \label{A6} \tag{A6}
\end{equation}

From the TAYLOR-McLAURIN expansion of $x^{-s}$   around the point   $x=N+m$,   $1\leq m\leq \infty$: 
\begin{equation}
\begin{split}
&\int _{N+\frac{2.m-1}{2}}^{N+\frac{2.m+1}{2}}\left[(N+m)^{-s}-x^{-s}\right].dx\\
&=\int _{\eta =-\frac{1}{2}}^{\eta =+\frac{1}{2}}\left[(N+m)^{-s}-(N+m+\eta )^{-s}\right].d\eta\\
\\
&(N+m+\eta )^{-s}=\\
&(N+m)^{-s}-s.\eta .(N+m)^{-1-s}+\\
&\frac{s.(1+s)}{2!}.(N+m)^{-2-s}.\eta ^2-\\
&\frac{s.(1+s).(2+s)}{3!}.(N+m)^{-3-s}.\eta ^3+\ldots
\end{split} \label{A7} \tag{A7}
\end{equation}
it is noted that the integrals of the terms with odd exponents of   $\eta$  are  equal to zero, and therefore  only the integrals of the terms with even exponents of   $\eta$ are left:

\begin{equation}
\begin{split}
&\int _{\eta =-\frac{1}{2}}^{\eta =+\frac{1}{2}}\left[(N+m)^{-s}-(N+m+\eta )^{-s}\right].d\eta\\
&=-\int _{\eta =-\frac{1}{2}}^{\eta =+\frac{1}{2}}\left[\frac{s.(1+s)}{2!}.(N+m)^{-2-s}.\eta ^2+\frac{s.(1+s).(2+s).(3+s)}{4!}.(N+m)^{-4-s}.\eta ^4+\ldots\right].d\eta\\
&=-\left[\frac{s.(1+s)}{4.3!}.(N+m)^{-2-s}+\frac{s.(1+s).(2+s).(3+s)}{16.5!}.(N+m)^{-4-s}+\ldots \right]\\
&=-\frac{(N+m)^{-s}}{4}.\underset{k=0}{\overset{\infty }{\sum }}\frac{(N+m)^{-2.(k+1)}}{\Gamma (s)}.\frac{\Gamma (s+2.k+1)}{2^{2.k}.(2.k+3)!}
\end{split} \label{A8} \tag{A8}
\end{equation}

This result should be summed from   $m=1$    to   $m\rightarrow \infty $, by an iterative procedure replacing at each step the first sum (containing powers  $(N+m)^{-2-s},(N+m)^{-4-s}\ldots$ in the successive steps)  with an integral plus  corrective terms   as shown above for the  term in   $n^{-s}$. As a final result,  the structure of the main term and of the corrective terms is derived, as anticipated in footnote 2. The result is the formula  hereunder repeated:
\begin{equation}
Z(s)=\left[\frac{x_0}{s-1}+\underset{k=0}{\overset{\infty }{\sum }}\beta _{2.k+1}.\frac{\Gamma(s+2.k+1)}{\Gamma(s)}-\sum _{n=1}^{n=N}\left(\frac{n}{x_0}\right)^{-s}\right].\exp\left(-s.\ln{x_0}\right) \label{A9} \tag{A9}
\end{equation}

It remains to be discussed how to evaluate the integral  $\int _{N+\frac{1}{2}}^{\infty }x^{-s}.dx$   in \eqref{A6}, which could turn up to be an improper integral on the account of the infinite upper limit whereby, e.g., the integral function $\int x^{-s}.dx$ may diverge to infinity.

Let us begin by recalling the formal structure of the analytical function we are working with:

\begin{equation}
Z(s)=\underset{n=1}{\overset{\infty }{\sum }}n^{-s}=\underset{n=1}{\overset{\infty }{\sum }}e^{-s.\ln n} \label{A10} \tag{A10}
\end{equation}
where:
\begin{equation}
\begin{split}
s=\frac{1}{2}+\xi +i.Y_0\\
-\frac{1}{2}<\xi <+\frac{1}{2}
\end{split}
\label{A11} \tag{A11}
\end{equation}

Under our assumptions Eq.\eqref{A4} gives,   with  $x_0=N+\frac{1}{2}$:

\begin{equation}
\begin{split}
Z(s)=&\underset{n=1}{\overset{N}{\sum }}n^{-s}+\underset{X\rightarrow \infty }{\lim }\frac{X^{1-s}-\left(x_0\right)^{1-s}}{1-s}+\\
&\underset{K,N\rightarrow \infty }{\lim }\left[\frac{\left(x_0\right)^{-s-1}}{\Gamma (s)}.\underset{k=0}{\overset{k=K}{\sum }}\alpha _{2.k+1}.\Gamma (s+2.k+1).\left(x_0\right)^{-2.k}\right]
\end{split}
\label{A12} \tag{A12}
\end{equation}
which is not very helpful. But let us then  imagine to divide the infinite interval of integration into a succession of infinite adjacent intervals of exponentially increasing length:

\begin{equation}
\begin{split}
&x_0.\left(e^{\frac{\pi }{Y_0}}-1\right)+x_0.\left(e^{\frac{2.\pi }{Y_0}}-e^{\frac{\pi }{Y_0}}\right)+x_0.\left(e^{\frac{3.\pi }{Y_0}}-e^{2.\frac{\pi }{Y_0}}\right)+\\
&\ldots +x_0.\left(e^{\frac{j.\pi }{Y_0}}-e^{\frac{(j-1).\pi }{Y_0}}\right)+\ldots
\end{split}
\label{A13} \tag{A13}
\end{equation}

It comes successively:	
\begin{equation}
\begin{split}
Z(s)-\underset{n=1}{\overset{n=N}{\sum }}n^{-s}=&-\underset{j=1}{\overset{\infty }{\sum }}\frac{x_0^{1-s}}{1-s}\left[e^{(1-s).\frac{j.\pi }{Y_0}}+e^{(1-s).\frac{(j-1).\pi }{Y_0}}\right]-\\
&\frac{x_0^{-1-s}}{\Gamma (s)}.\underset{j=1}{\overset{\infty }{\sum }}\underset{k=0}{\overset{\infty }{\sum }}x_0^{-2.k}.\alpha _{2.k+1}.\Gamma (s+2.k+1).\\
&\left[e^{-(2.k+1-s).\frac{j.\pi }{Y_0}}+e^{-(2.k+1-s).\frac{(j-1).\pi }{Y_0}}\right]
\end{split}
\label{A14} \tag{A14}
\end{equation}

Let us  observe that the first sum with respect to index $j$  is the sum of a geometric series with alternating signs (because of the factors  $e^{-j.i.Y_0.\frac{\pi }{Y_0}}=e^{-j.i\pi}=(-1)^j$ ), so that this sum can be obtained by the usual formula:   $\underset{j=0}{\overset{j=\infty }{\sum }}a .r^j=\frac{a}{1-r}$,   despite the fact that   $|r|=\left|e^{-\left(2.k+1-\frac{1}{2}-\xi \right).\frac{j.\pi }{Y_0}}\right|>1$. A similar development, substituting in place of  $n^{-s}$ successively  $n^{-2-s}  ,n^{-4-s}\ldots$, can be used for the iterative procedure leading to the final expression of the corrective terms. Therefore, posing  $\beta _{2.k+1}=\alpha _{2.k+1}.x_0^{-(2.k+1)}$, where  $\alpha _1=\frac{1}{24}$ and
\begin{equation}
\begin{split}
\alpha _{2.k+1}=&-\frac{\alpha _{2.k-1}}{2^2.3!}-\frac{\alpha _{2.k-3}}{2^4.5!} - \ldots .+\frac{1}{2^{2.k+2}.(2.k+3)!}\\
\text{(where }& \lim \left(\frac{\beta _{2.k+1}}{\beta _{2.k-1}}\right)\rightarrow -\frac{1}{4 \pi ^2  x_0^2}   \text{ for }   k\gg 1  \text{),  Eq.\eqref{A12} becomes:}\\
Z(s)-\underset{n=1}{\overset{N}{\sum }}n^{-s}=&-\frac{x_0^{1-s}}{1-s}.\frac{e^{(1-s).\frac{\pi }{Y_0}}+1}{1+e^{(1-s).\frac{\pi }{Y_0}}}-\\
&\frac{x_0^{-s}}{\Gamma (s)}.\underset{k=0}{\overset{\infty }{\sum }}\beta _{2.k+1}.\Gamma (s+2.k+1).\frac{e^{-(2.k+1-s).\frac{\pi }{Y_0}}+1}{1+e^{-(2.k+1-s).\frac{\pi }{Y_0}}}\\
Z(s)-\underset{n=1}{\overset{N}{\sum }}n^{-s}=&-\frac{x_0^{1-s}}{1-s}-\frac{x_0^{-s}}{\Gamma (s)}.\underset{K\rightarrow \infty }{\lim }\underset{k=0}{\overset{K}{\sum }}\beta _{2.k+1}.\Gamma (s+2.k+1)
\end{split}
\label{A15} \tag{A15}
\end{equation}

 It is seen that the convergence of this expression is strongly conditioned by the choice of   $x_0=N+\frac{1}{2}$   and of   $K$.

The equation that makes   $Z(s)$   defined in the critical strip is therefore:
\begin{equation}
Z(s)=\underset{n=1}{\overset{n=N}{\sum }}n^{-s}-\frac{x_0^{1-s}}{1-s}-\frac{x_0^{-s}}{\Gamma (s)}.\underset{K\rightarrow \infty }{\lim }\underset{k=0}{\overset{K}{\sum }}\beta _{2.k+1}.\Gamma (s+2.k+1)
\label{A16} \tag{A16}
\end{equation}
and the equation that defines the zero-points on the C.L. takes on the form:
\begin{equation}
\underset{n=1}{\overset{n=N}{\sum }}\frac{n^{-s}}{x_0^{1-s}}-\frac{1}{1-s}-\frac{1}{\Gamma (s)}.\underset{K\rightarrow \infty }{lim}\underset{k=0}{\overset{K}{\sum }}\beta _{2.k+1}.\Gamma (s+2.k+1)=0
\label{A17} \tag{A17}
\end{equation}
or:
\begin{equation}
\frac{1}{4}+Y_0^2=s.(1-s)=\frac{-s.\Gamma (s)}{\frac{\underset{k=0}{\overset{K}{\sum }}\beta _{2.k+1}.\Gamma (s+2.k+1)}{x_0}-\Gamma (s).\underset{n=1}{\overset{n=N}{\sum }}\frac{n^{-s}}{x_0^{1-s}}} \text{, } s=\frac{1}{2} \pm iY_0
\label{A18} \tag{A18}
\end{equation}

Eq.\eqref{A18} is an implicit equation where the only unknown is  $Y_0$.

%
%
\section{Figures}\label{sec:7}
\begin{figure}[!htb]
\centering
\includegraphics[scale=0.7]{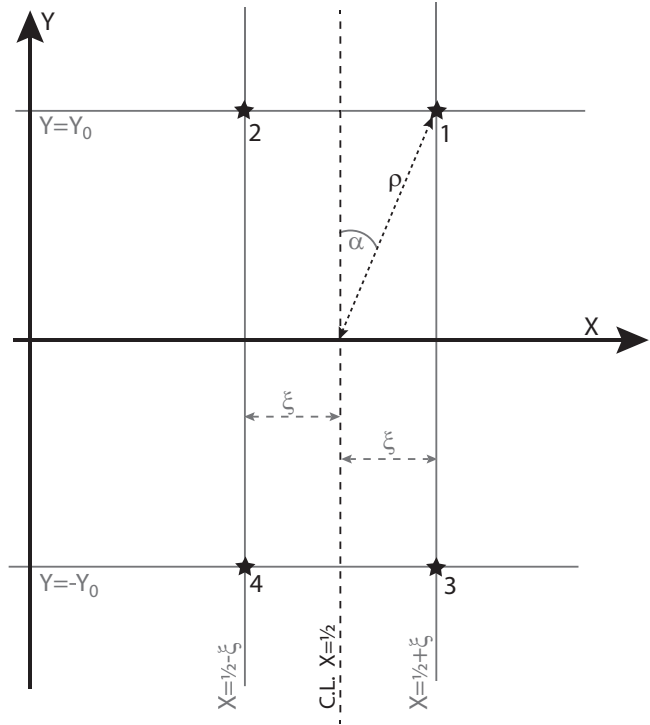}
\caption{Numeration of the four hypothetical outliers  1, 2, 3, 4}
\label{fig:1}
\end{figure}

%
%

\end{document}